% ------------------------------------------------------------------------
% bjourdoc.tex for birkjour.cls*******************************************
% ------------------------------------------------------------------------
%%%%%%%%%%%%%%%%%%%%%%%%%%%%%%%%%%%%%%%%%%%%%%%%%%%%%%%%%%%%%%%%%%%%%%%%%%

\documentclass{birkjour}
%
%
% THEOREM Environments (Examples)-----------------------------------------
%
 \newtheorem{thm}{Theorem}[section]
 \newtheorem{cor}[thm]{Corollary}
 \newtheorem{lem}[thm]{Lemma}
 
 \theoremstyle{definition}
 \newtheorem{defn}[thm]{Definition}
 \theoremstyle{remark}
 \newtheorem{rem}[thm]{Remark}
 \newtheorem*{ex}{Example}
 \numberwithin{equation}{section}
 \newtheorem{alg}[thm]{Algorithm}

\usepackage{amssymb, amsthm, amsmath,color}
\usepackage{xy} \input xy \xyoption{all}
\usepackage[noend]{algorithmic}
\algsetup{linenodelimiter=.}

\newcommand{\F}{\mathbb{F}}
\newcommand{\C}{\mathbb{C}}

\newcommand{\N}{\mathbb{N}}

\newcommand{\mB}{\mathcal{B}}
\newcommand{\mV}{\mathcal{V}}
\newcommand{\mP}{\mathcal{P}}
\newcommand{\mQ}{\mathcal{Q}}
\newcommand{\mR}{\mathcal{R}}

\newcommand{\kpartial}[2]{\overrightarrow{^{#1}\partial{#2}}}
\newcommand{\canonicalvector}[1]{\mathbf{e}_{#1}}
\newcommand{\column}[1]{\mathbf{c}_{#1}}

\newcommand{\ot}{\,\overline{t}\,}
\newcommand{\odelta}{\,\overline{\delta}\,}
\newcommand{\ox}{\,\overline{x}\,}

\newcommand{\oa}{\,\overline{a}\,}

\newcommand{\orr}{\,\overline{r}\,}
\newcommand{\ow}{\,\overline{w}\,}
\newcommand{\oz}{\,\overline{z}\,}

\newcommand{\BP}{\mB_\mP}
\newcommand{\VP}{\mV_\mP}

\newcommand{\VT}{\mV_{\mathbb{T}}}
   % \sqrt{-1}

\DeclareMathOperator{\Ima}{Im}

\begin{document}

%-------------------------------------------------------------------------
% editorial commands: to be inserted by the editorial office
%
%\firstpage{1} \volume{228} \Copyrightyear{2004} \DOI{003-0001}
%
%
%\seriesextra{Just an add-on}
%\seriesextraline{This is the Concrete Title of this Book\br H.E. R and S.T.C. W, Eds.}
%
% for journals:
%
%\firstpage{1}
%\issuenumber{1}
%\Volumeandyear{1 (2004)}
%\Copyrightyear{2004}
%\DOI{003-xxxx-y}
%\Signet
%\commby{inhouse}
%\submitted{March 14, 2003}
%\received{March 16, 2000}
%\revised{June 1, 2000}
%\accepted{July 22, 2000}
%
%
%
%---------------------------------------------------------------------------
%Insert here the title, affiliations and abstract:
%

	\title[Transforming radical ODEs and PDEs into rational coefficients]
 {Transforming ODEs and PDEs with radical coefficients into rational coefficients}

\thanks{The authors are partially supported by FEDER/Ministerio de Ciencia, Innovaci\'on y Universidades - Agencia Estatal de Investigaci\'on/MTM2017-88796-P (Symbolic Computation: new challenges in Algebra and Geometry together with its applications).}
\thanks{The first, second and fourth authors are members of the Research Group ASYNACS (Ref. CT-CE2019/683). In addition, the second author is also partially supported by Direcci\'on General de Investigaci\'on e Innovaci\'on, Consejería de Educaci\'on e Investigaci\'on of the Comunidad de Madrid (Spain), and Universidad de Alcal\'a (UAH) under grant CM/JIN/2019-010.%Universidad de Alcal\'a (UAH)​ under grant CM/JIN/2019-010.
}
\thanks{The third author is a member of the research group GADAC and is partially supported by Junta de Extremadura and FEDER funds (group FQM024).}

%----------Author 1
\author[Caravantes]{Jorge Caravantes}
\address{%
Dept. of Physics and Mathematics, University of Alcal\'a \\ E-28871 Alcal\'a de Henares (Madrid, Spain)}
\email{jorge.caravantes@uah.es}

%----------Author 2
\author[Sendra]{J. Rafael Sendra}
\address{%
	Dept. of Physics and Mathematics, University of Alcal\'a \\ E-28871 Alcal\'a de Henares (Madrid, Spain)}
\email{rafael.sendra@uah.es}

%----------Author 3
\author[Sevilla]{David Sevilla}
\address{%
	U. Center of M\'erida, University of Extremadura \\ Av. Santa Teresa de Jornet 38 \\ E-06800 M\'erida (Badajoz, Spain)}
\email{sevillad@unex.es}

%----------Author 4
\author[Villarino]{Carlos Villarino}
\address{%
	Dept. of Physics and Mathematics, University of Alcal\'a \\ E-28871 Alcal\'a de Henares (Madrid, Spain)}
\email{carlos.villarino@uah.es}

%----------classification, keywords, date
\subjclass{Primary 99Z99; Secondary 00A00}

\keywords{Algebraic ordinary differential equations, Algebraic partial differential equations, Rational reparametrization, Radical coefficients}

%\date{January 1, 2004}
%----------additions
%\dedicatory{To my boss}
%%% ----------------------------------------------------------------------

\begin{abstract}
We present an algorithm that transforms, if possible, a given ODE or PDE with radical function coefficients into one with rational coefficients by means of a rational change of variables. It also applies to systems of linear ODEs. It is based on previous work on reparametrization of radical algebraic varieties.
\end{abstract}

%%%% ----------------------------------------------------------------------
%\maketitle
\maketitle
%%%% ----------------------------------------------------------------------
%%\tableofcontents

\section{Introduction}\label{sec:intro}

Within the vast variety of differential equations, certain subtypes are amenable to solving. A reasonable strategy is to transform given equations, by means of a change of variables, into subtypes for which algorithms exist. Let us consider an example, that will be revisited in Remark \ref{remark:detalles ejemplo intro}.

\begin{ex}\label{ex:intro}
	We consider the ODE
	\begin{equation}\label{eq:intro_radicales}
	\left(\,(14x+12)\sqrt{x}+(13x+4)\sqrt{x+1}\,\right)\,y+4(x^2+x)\, (y')^2=0.
	\end{equation}
	Maple 2018 expresses the nontrivial solution as the solution of an integral equation. %Since the coefficients have $\delta_1=\sqrt{x}$ and $\delta_2=\sqrt{x+1}$, we can consider $\VT$ defined by $(x,\delta_1(x),\delta_2(x))$, that can be parametrized rationally by
	%\[Q(z)=(\,r(z),\delta_1(r(z)),\delta_2(r(z))\,)=\left(\frac{(z^2-1)^2}{4z^2}, \frac{z^2-1}{2z}, \frac{z^2+1}{2z}\right).\]
	Using the techniques described in this article, we find the change
	\[
	x=r(z)=\frac{(z^2-1)^2}{4z^2}
	\]
	that transforms the equation into one with the unknown $Y(z)=y(r(z))$:
	%We now perform the change $x=r(z)$ and  we get, for $Y(z)=y(r(z))$, the equation
	\begin{equation}\label{eq:intro_algebraica}
	\left( {z}^{12}-2\,{z}^{10}-{z}^{8}+4\,{z}^{6}-{z}^{4}-2\,{z}^{2}+1
	\right) Y  +16\, {z}^{8}\, \left( Y'  \right) ^{2}=0.
	\end{equation}
	The nonzero solution is, as given by Maple 2018,
	\begin{equation}\label{eq:intro_solucion_racional}
	Y \left( z \right) =-{\frac { \left( {z}^{6}+3\,C\,{z}^{3}-3
			\,{z}^{4}+3\,{z}^{2}-1 \right) ^{2}}{576\,{z}^{6}}}.
	\end{equation}
	%The inverse of  $Q$ is defined by  $h(A,B,C)=B+C$, so we substitute $z=\sqrt{x}+\sqrt{x+1}$ to get $y(x)=Y\left(\sqrt{x}+\sqrt{x+1}\,\right)$, that solves the equation.
	The inverse substitution, also obtained with our method, is $z=\sqrt{x}+\sqrt{x+1}$, so the general solution of the original equation is $y(x)=Y\left(\sqrt{x}+\sqrt{x+1}\,\right)$.
	
	%Expl\'icitamente, no se simplifica nada y se nos sale de los m\'argenes:
	%\[y(x)=-{\frac { \left(  \left( \sqrt {x+1}+\sqrt {x} \right) ^{6}+3\,C\, \left( \sqrt {x+1}+\sqrt {x} \right) ^{3}-3\, \left( \sqrt {x+1}+\sqrt {x} \right) ^{4}+3\, \left( \sqrt {x+1}+\sqrt {x} \right) ^{2}-1 \right) ^{2}}{576\, \left( \sqrt {x+1}+\sqrt {x} \right) ^{6}}}
	%\]
\end{ex}

A class of ODEs and PDEs for which there is a body of research is that of algebraic equations. They are polynomials in the unknown and its derivatives, and the coefficients of the polynomial are rational function in the variable(s) of the unknown. See \cite{MR3336231,MR3037915,MR2907990,CoxLittleOshea,ChauWinkler2010,MR3790007,MR3720262,MR3460289,10.1145/236869.237073}. In the previous example we transform a nonalgebraic equation into an algebraic one.

Our contribution is a method to find, if they exist, changes of variables that convert polynomial ODEs and PDEs with radical coefficients into equations that are algebraic. It is based on our previous work on rational reparametrization of radical varieties; see \cite{SS2011,SS2013} and specially \cite{SSV}.

The problem that we will study is the following. Given the variables $x_1,\ldots,x_n$, define a radical tower over $\C(x_1,\ldots,x_n)$,
\begin{equation}\label{eq:tower}
	\F_0=\C(x_1,\ldots,x_n)\subseteq\dots\subseteq\F_{m-1}\subseteq\F_{m}
\end{equation}
where $\F_i=\F_{i-1}(\delta_i)=\F_0(\delta_1,\ldots,\delta_i)$ with $\delta_i^{e_i}=\alpha_i\in\F_{i-1}$, $e_i\in\N$. Intuitively, $\F_m$ is the field of functions built using several radicals (possibly nested).

Now we consider, in the case of only one variable, an ODE
\begin{equation}\label{eq:la ecuacion}
	F(x,y,y^{\prime},\ldots,y^{s)})=0
\end{equation}
where $y$ is a function of $x$ and $F\in\F_{m}[\ow]$, where $\ow=(w_0,\ldots,w_s)$. That is, the equation is polynomial in $y$ and its derivatives, with the $x$ in the coefficients possibly appearing inside radicals. Our goal is to find, if it exists, a rational change of variable of the form
\[ x=r(z) \]
such that the new equation
\begin{equation}\label{eq:la ecuacion2}
	G(r(z),Y(z),\ldots, Y^{s)}(z))=0, \,\,\text{where $Y(z)=y(r(z))$},
\end{equation}
is an algebraic ODE in sense that the coefficients of $G$ are not just in $\F_m$ but are in fact rational functions.

In an analogous way, given a PDE that can be expressed as a polynomial in the partial derivatives of the unknown $y(x_1,\ldots,x_n)$, and whose coefficients are radical functions of the $x_i$, we compute, if it exists, a rational change of the $n$ variables such that the transformed equation has rational coefficients.

The structure of this article is as follows. The next section describes the algebraic preliminaries that will be used later. Section 3 describes the results on ordinary differential equations, Section 4 deals with linear systems of them, and Section 5 treats the case of partial differential equations.

\section{Preliminaries on radical varieties}\label{sec:radical varieties}

In this section we recall some results on radical varieties; for further details see \cite{SSV}. 
{Some of the notions and results in this section are framed within the field of algebraic geometry,
A good starting point for algebraic geometry from a computational point of view is \cite{CoxLittleOshea}.}

 We will express tuples of variables or functions by using bars over their names, like $\ox=(x_1,\ldots,x_n)$. 

Let $\F_m$ and $\delta_i$ {be} defined as in \eqref{eq:tower}; the latter can be regarded as functions of the $\ox$. A radical parametrization is a tuple $\mP=(\oa(\ox))$ of elements of $\F_m$ whose Jacobian has rank $n$. For a suitable election of branches we obtain a (usually nonrational) function with domain in $\C^n$ and whose image is Zariski dense in an $n$-dimensional variety; the Zariski closure of $\Ima(\mP)$ is the radical variety determined by $\mP$, denoted by $\VP$.

\begin{thm}[\cite{SSV},Theorem 3.11 (iii)]
	$\VP$ is an $n$-dimensional irreducible algebraic variety.
\end{thm}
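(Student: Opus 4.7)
The plan is to realise $\VP$ as the zero locus of the kernel of an evaluation homomorphism into $\F_m$ and then extract irreducibility from the fact that $\F_m$ is a field, while reading off the dimension from a Jacobian criterion for transcendence degree.

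Write $\oa=(a_1,\ldots,a_N)$, and consider the $\C$-algebra map $\phi\colon\C[y_1,\ldots,y_N]\to\F_m$ defined by $y_j\mapsto a_j$; set $I=\ker\phi$. First I would show $I(\VP)=I$. A polynomial $P$ lies in $I(\VP)$ iff $P(\oa(\ox))$ vanishes on the (nonempty Euclidean-open) domain where a chosen compatible branch of the nested radicals $\delta_i$ is analytic; because after such a branch choice the tower \eqref{eq:tower} embeds into the field of meromorphic functions on that domain, this is equivalent to $P(a_1,\ldots,a_N)=0$ inside $\F_m$, i.e.\ to $P\in I$. Hence $\VP=V(I)$. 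Irreducibility is then immediate: $\Ima\phi\subseteq\F_m$ is an integral domain, so $I$ is prime and $\VP$ is irreducible.

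For the dimension I would invoke $\dim\VP=\mathrm{trdeg}_{\C}\C(a_1,\ldots,a_N)$. Because each step $\F_i=\F_{i-1}(\delta_i)$ is algebraic and separable over $\F_{i-1}$ (characteristic zero), the standard derivations $\partial/\partial x_i$ on $\F_0$ extend uniquely to derivations of $\F_m$. The Jacobian of $\mP$ is then precisely the matrix $J=(\partial a_j/\partial x_i)$ with entries in $\F_m$, and the classical Jacobian criterion for transcendence degree yields
\[
\mathrm{trdeg}_{\C}\C(a_1,\ldots,a_N)=\rank_{\F_m}J=n
\]
by hypothesis, so $\dim\VP=n$.

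The main obstacle is the ``analytic versus algebraic'' step identifying vanishing of $P(\oa(\ox))$ as a function with vanishing of $\phi(P)$ in the formal field $\F_m$: it requires a coherent branch choice for all nested $\delta_i$ on a common polydisk, so that $\F_m$ can be interpreted inside a field of analytic functions; this is the content underlying the setup of \cite{SSV}. Once this bridge is in place, the remainder is standard commutative algebra together with the Jacobian criterion, which in characteristic zero is clean.
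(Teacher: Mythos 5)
The paper does not actually prove this statement; it is imported verbatim as \cite{SSV}, Theorem 3.11~(iii), so there is no internal proof to compare against. Your argument is a legitimate self-contained substitute and is essentially correct: identifying $I(\VP)$ with $\ker\phi$ makes $\VP=V(I)$ with $I$ prime (hence irreducible), and the characteristic-zero Jacobian criterion, applied to the unique extensions of $\partial/\partial x_i$ along the separable tower \eqref{eq:tower}, converts the rank-$n$ hypothesis in the definition of a radical parametrization into $\operatorname{trdeg}_{\C}\C(a_1,\ldots,a_N)=n$, which is $\dim\VP$. The one point you correctly flag as the crux --- that a coherent branch choice embeds $\F_m$ into meromorphic functions on a connected open set, so that ``vanishes as a function on the domain'' is equivalent to ``equals $0$ in $\F_m$'' (injectivity being automatic for a nonzero homomorphism out of a field, plus the identity theorem for analytic functions) --- is exactly the content of the setup of \cite{SSV} and is assumed in this paper's Section~\ref{sec:radical varieties}, so relying on it is fair. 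The reference itself organizes the argument differently, working through the incidence variety $\BP$ and the projections in diagram \eqref{eq:diagrama2} (showing $\BP$ is irreducible of dimension $n$ and pushing that down to $\VP$), which has the advantage of simultaneously setting up the tracing index; your route is more direct for this single statement but does not produce those auxiliary objects. Two small points of care: the rank of $J$ is at most $n$ automatically (it has $n$ rows), so the hypothesis is really that it attains the maximum; and you should fix a connected component of the domain when invoking the identity theorem.
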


The function $\mP$ can be factored as $\mR\circ\psi$, where $\psi$ is a nonrational function whose components are the $\odelta$ (the radicals in the construction) and $\mR$ is a rational function in the $\ox$ and the $\odelta$.

Another relevant algebraic variety in our construction is the tower variety of $\mP$, defined as the Zariski closure of $\Ima(\psi)$. It is also irreducible of dimension $n$. Therefore we have now a rational map $\mR\colon\VT\to\VP$. The tower variety contains useful information about $\mP$ and is the central object in this article's results.

One more ingredient is an incidence variety, the Zariski closure of the map $\ox\mapsto(\ox,\odelta,\oa)$, that we denote as $\BP$. This provides a projection $\pi\colon\BP\to\VP$. We define the tracing index of $\mP$ as the cardinal of the generic fibre of $\pi$. See \cite[Example 4.4]{SSV} for the calculation of the tracing index.

The following diagram summarizes the different elements introduced so far.
\begin{equation}\label{eq:diagrama2}
\hspace*{-3mm}
\xy
(0,0)*{\xy
	(-5,5)*++{\BP}="BP";
	"BP"+(12,0.5)*{\subset\C^{n+m+r}};
	(-11,-25)*++{\VP}="VP";
	"VP"+(-4,0)*{\supset}; "VP"+(-8,0)*++{\C^r};		
	(15,-25)*++{\C^n}="Cn";
	{\ar_{\displaystyle \pi} "BP"; "VP"};
	{\ar^(0.6){\displaystyle \varphi} "Cn"; "BP"};
	{\ar_(0.4){\displaystyle \mP} "Cn"; "VP"};
	(15,-5)*++{\VT}="VT";
	"VT"+(4,0)*{\subset}; "VT"+(11,0.5)*++{\C^{n+m}};
	{\ar_{\displaystyle \psi} "Cn"; "VT"};
	{\ar_(0.6){\displaystyle \mR} "VT"; "VP"};
	%{\ar^{\displaystyle \pi^*} "BP"; "VT"};
	\endxy};
(30,-8)*{\mbox{defined as}};
(60,0)*{\xy
	(-5,5)*+{(\ot,\odelta,\ox)}="BP";
	(-11,-25)*++{\ox}="VP";
	(15,-25)*++{\ot}="Cn";
	{\ar@{|->}_{\displaystyle \pi} "BP"; "VP"};
	{\ar@{|->}^(0.6){\displaystyle \varphi} "Cn"; "BP"};
	{\ar@{|->}_(0.4){\displaystyle \mP} "Cn"; "VP"};
	(15,-5)*+{(\ot,\odelta)}="VT";
	{\ar@{|->}_{\displaystyle \psi} "Cn"; "VT"};
	{\ar@{|->}_(0.6){\displaystyle \mR} "VT"; "VP"};
	%{\ar@{|->}^{\displaystyle \pi^*} "BP"; "VT"};
	\endxy};
\endxy
\end{equation}

The case where the tracing index is 1 is of particular interest.

\begin{thm}[\cite{SSV}, Theorem 4.11]
	If the tracing index of $\mP$ is 1, then $\VP$ and $\VT$ are birationally equivalent. Therefore, $\VP$ is rational (parametrizable rationally in an invertible way) if and only if $\VT$ is rational.
\end{thm}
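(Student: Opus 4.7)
The plan is to exhibit an explicit birational equivalence between $\VT$ and $\VP$ by routing through the incidence variety $\BP$, and then to derive the second statement from the fact that rationality is a birational invariant.

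First I would analyze the projection $\rho\colon \BP\to\VT$ onto the first $n+m$ coordinates $(\ot,\odelta)$. A generic point of $\BP$ is of the form $(\ot,\odelta,\oa)$ with $\oa=\mR(\ot,\odelta)$, and by the decomposition $\mP=\mR\circ\psi$ the map $\mR$ is rational in its arguments. Therefore the assignment
\[
(\ot,\odelta)\longmapsto\bigl(\ot,\odelta,\mR(\ot,\odelta)\bigr)
\]
is a rational inverse of $\rho$ defined on a dense open subset of $\VT$. Hence $\rho$ is birational; this step uses only the structure of the diagram \eqref{eq:diagrama2} and does not require any hypothesis on the tracing index.

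Next I would treat the projection $\pi\colon\BP\to\VP$. By construction $\pi$ is dominant, and both $\BP$ and $\VP$ are irreducible of dimension $n$ (the latter by the previously cited theorem, the former as the Zariski closure of the image of a single-valued map from $\C^n$, composed with a dominant projection onto $\VP$). The hypothesis that the tracing index of $\mP$ equals $1$ says exactly that the generic fibre of $\pi$ is a single point, so $\pi$ is generically injective. A dominant, generically injective morphism between irreducible varieties of the same dimension is birational, so $\pi$ is birational. Composing $\rho^{-1}$ with $\pi$ produces a birational map $\VT\dashrightarrow\VP$, which proves the first assertion.

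For the second statement, recall that a variety is rational precisely when it is birationally equivalent to an affine space, and that birational equivalence is transitive. Applying this to the birational equivalence $\VT\sim\VP$ just established gives: $\VP$ is birational to $\C^n$ if and only if $\VT$ is, i.e. $\VP$ is rational if and only if $\VT$ is rational. The main obstacle is essentially bookkeeping: one must verify that the generic-fibre characterisation of the tracing index truly yields birationality (which relies on irreducibility and equal dimensions, both already in hand) and that the inverse of $\rho$ coming from $\mR$ is well defined on a dense open subset; both follow cleanly from the constructions recalled in Section~\ref{sec:radical varieties}.
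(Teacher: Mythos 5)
Your argument is correct: the section $(\ot,\odelta)\mapsto(\ot,\odelta,\mR(\ot,\odelta))$ makes $\rho\colon\BP\to\VT$ birational, and in characteristic zero a dominant, generically injective map between irreducible $n$-dimensional varieties (which is what tracing index $1$ gives for $\pi$) is birational, so composing yields the birational equivalence and the rationality statement follows by birational invariance. Note that the paper itself offers no proof here --- it quotes the result from \cite{SSV} --- but your reconstruction matches the structure of diagram \eqref{eq:diagrama2} and the facts (irreducibility and dimension of $\BP$, $\VT$, $\VP$) established there, so it is essentially the intended argument.
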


\section{ODEs}

In this section, we will use some of the notation introduced previously. In particular, $\F_m$ is the last field of a radical tower over $\C(x)$ and $\odelta=(\delta_1,\ldots,\delta_m)$ is the tuple of algebraic elements used in the construction of the tower. We also consider the ODE $F(x,y,y^{\prime},\ldots,y^{s)})=0$ introduced in \eqref{eq:la ecuacion}.

We start with a technical lemma that describes what happens to the unknown when a change of variable is applied.

\begin{lem}\label{lemma:transformacion}
	Let $ r(z)$ be a non-constant rational function, and let $Y(z)=y(r(z))$. Then
	\[
	\left( \begin{array}{c} y(r(z)) \\ y^{\prime}(r(z)) \\ \vdots \\ y^{s)}(r(z)) \end{array} \right) = L(z) \left( \begin{array}{c} Y(z) \\ Y^{\prime}(z) \\ \vdots \\ Y^{s)}(z) \end{array} \right),
	\]
	where $L$ is a lower triangular matrix with entries in $\C[r^\prime(z),\ldots,r^{s)}(z),\frac{1}{r^{\prime}(z)}]$, in particular of the form
	\[
	\frac{P(r^\prime(z),\ldots,r^{s)}(z))}{r^{\prime}(z)^{k}}
	\]
	with $k\in \N$ and $P$ a polynomial with complex coefficients. Moreover, $\det(L)$ is a power of $\frac{1}{r^{\prime}(z)}$.
\end{lem}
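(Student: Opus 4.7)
The plan is to first derive, via the chain rule, an explicit triangular linear system expressing the column $(y(r(z)),y'(r(z)),\ldots,y^{s)}(r(z)))^T$ in terms of $(Y(z),Y'(z),\ldots,Y^{s)}(z))^T$, and then to invert it.

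First, I would prove by induction on $i$ that, for $1\le i\le s$,
\[
Y^{i)}(z) \;=\; r'(z)^i\, y^{i)}(r(z)) \;+\; \sum_{j=1}^{i-1} p_{i,j}\bigl(r'(z),r''(z),\ldots,r^{i)}(z)\bigr)\, y^{j)}(r(z)),
\]
with $Y(z)=y(r(z))$, where each $p_{i,j}$ is a polynomial with integer coefficients. This is essentially Fa\`a di Bruno's formula, but one does not need the explicit shape: the base case $i=1$ is just $Y'(z)=r'(z)\,y'(r(z))$, and the inductive step follows by differentiating the identity for $Y^{i)}$ and applying the chain rule to each $y^{j)}(r(z))$. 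The only ``new'' top-order contribution produced when passing from $i$ to $i{+}1$ is $r'(z)\cdot r'(z)^i\, y^{i+1)}(r(z))=r'(z)^{i+1}\,y^{i+1)}(r(z))$; all remaining coefficients are polynomial in $r',\ldots,r^{i+1)}$ by the induction hypothesis.

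This identity packages into a matrix equation $\vec Y = M(z)\,\vec y$, where $M$ is lower triangular with diagonal entries $1,r'(z),r'(z)^2,\ldots,r'(z)^s$ and polynomial entries (in $r',\ldots,r^{s)}$) below the diagonal. Since $r$ is non-constant, $r'(z)\not\equiv 0$, so $M$ is invertible and we can take $L:=M^{-1}$.

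Finally, since the inverse of a lower triangular matrix is lower triangular with reciprocal diagonal, $L$ is lower triangular with diagonal $(1,1/r'(z),1/r'(z)^2,\ldots,1/r'(z)^s)$, whence
\[
\det(L)=\det(M)^{-1}=r'(z)^{-s(s+1)/2},
\]
a power of $1/r'(z)$ as claimed. For the stated shape of the entries below the diagonal, I would use the standard recursive formula
\[
L_{ij} \;=\; -\,M_{ii}^{-1}\sum_{k=j}^{i-1} M_{ik}\,L_{kj}\qquad (i>j),
\]
and argue by induction on $i-j$ that each $L_{ij}$ has the form $P/r'(z)^k$ with $P\in\C[r'(z),\ldots,r^{s)}(z)]$ and $k\in\N$: indeed, the only divisions introduced at each step are by the diagonal entries $M_{ii}=r'(z)^i$, and multiplication by the polynomial entries $M_{ik}$ keeps the numerator polynomial. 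No step is deep; the only real bookkeeping — and the mildest possible obstacle — is tracking that denominators remain pure powers of $r'(z)$ throughout the triangular inversion, which is immediate from the structure of $M$.
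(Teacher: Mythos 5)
Your proof is correct, but it is organized differently from the paper's. The paper runs a single induction directly on the ``inverse'' identity
\[
r'(z)^k\,y^{k)}(r(z)) \;=\; Y^{k)}(z)+\sum_{j=1}^{k-1}R_j\bigl(r',\ldots,r^{k)}\bigr)\,Y^{j)}(z),
\qquad R_j\in\C\Bigl[w_1,\ldots,w_k,\tfrac{1}{w_1}\Bigr],
\]
obtained by differentiating the identity at order $\ell$ and substituting the induction hypothesis for the lower-order term $y^{\ell)}(r(z))$ that appears; dividing by $r'(z)^k$ then yields the rows of $L$ directly, with no matrix inversion. You instead prove the ``forward'' Fa\`a di Bruno--type relation $\vec Y=M\vec y$ with $M$ lower triangular, polynomial entries and diagonal $1,r',\ldots,r'^s$, and then set $L=M^{-1}$, tracking denominators through the standard recursive formula for inverting a triangular matrix. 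Both arguments are inductions on the derivation order driven by the chain rule; yours has the advantage that the forward relation is the more transparently correct one (all coefficients stay polynomial until the very end, and $\det(L)=r'^{-s(s+1)/2}$ drops out explicitly), at the cost of an extra bookkeeping induction on $i-j$ for the inversion. The paper's version is shorter because the denominators $1/r'$ are absorbed into the ring $\C[w_1,\ldots,w_k,\frac{1}{w_1}]$ from the start and the matrix $L$ is read off immediately. Your argument establishes everything the lemma claims, including the precise shape $P(r',\ldots,r^{s)})/r'^{\,k}$ of the entries and the determinant being a power of $1/r'$.
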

\begin{proof}
	We prove by induction that for $k\in \{0,\ldots,s\}$ it holds that
	\[
	r^{\prime}(z)^k y^{k)}(r(z))=Y^{k)}(z)+\sum_{j=1}^{k-1} R_{j}(r^{\prime},\ldots,r^{k)})Y^{j)}(z)
	\]
	for some $R_j\in \C[w_1,\ldots,w_k,\frac{1}{w_1}]$. For $k=0$ is obvious. Let us assume that it holds for $k=\ell$. Taking derivatives in the expression above for $k=\ell$ we have that, for some $Q_j\in \C[w_1,\ldots,w_{\ell+1},\frac{1}{w_1}]$,
	\[
	\ell r^{\prime}(z)^{\ell-1} r^{\prime\prime}(z) y^{\ell)}(r(z))+r^{\prime}(z)^{\ell+1} y^{\ell+1)}(r(z)) =
	\]
	\[
	= Y^{\ell+1)}(z)+\!\sum_{j=1}^{\ell}\! Q_j(r^{\prime},\ldots,r^{\ell+1)}) Y^{j)}(z).
	\]
	Applying the hypothesis of induction one ends the proof.
\end{proof}

\begin{rem}
	There exist transcendental functions $r(z)$ with radical or rational derivatives, like logarithms, arctangents and arcsines, but they fall outside the scope of this article, because we need $\odelta(r(z))$ to be rational, and this implies that $r(z)$ must be algebraic.
	
	%It is not difficult to prove that the derivative of an algebraic irrational function cannot be rational.
\end{rem}

%\begin{definition}
%Let $\mathcal{I}$ be the class of all non-constant meromorphic functions $r(z)$ such that $r'(z)\in \C(z)$.
%%\mnote{existira una clase mas amplia donde podemos hacer cosas? que queden radicales en las $a(r)$ y en la $r'$ pero que se simplifiquen}
%We observe that, if $r(z)\in \mathcal{I}$,   the matrix $L$ in Lemma \ref{lemma:transformacion} has entries in $\C(z)$.
%\end{definition}

\begin{thm}\label{th:equivalencia}
	Let $ r(z)$ be a non-constant rational function, and let $\oa(x)\in \F_{m}^{\ell}$ be the tuple of non-rational (function) coefficients of $F(x,\ow)$ w.r.t. $\ow$, taken in any order. The following are equivalent:
	\begin{enumerate}
		\item Equation (\ref{eq:la ecuacion2}) is algebraic for the substitution $x=r(z)$.
		\item All the components of $\oa(r(z))$ are rational.
		%\item There exists $r(z)\in\mathcal{I}$ such that the $\odelta(r(z))$ are rational.
	\end{enumerate}
\end{thm}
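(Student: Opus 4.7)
The plan is to write out $F$ explicitly as a polynomial in $\ow$, apply Lemma \ref{lemma:transformacion} to perform the substitution, and then invert the resulting linear change of variables on the derivatives. Write $F(x,\ow)=\sum_\alpha c_\alpha(x)\,\ow^\alpha$ with $c_\alpha\in\F_m$, so that the set of nonrational coefficients is exactly (the entries of) $\oa(x)$. Lemma \ref{lemma:transformacion} gives a lower-triangular matrix $L(z)$ with entries in $\C(z)$ (since $r$ is rational) and with $\det(L(z))$ equal to a nonzero power of $1/r'(z)$, such that $(y^{j)}(r(z)))_{j=0}^{s}=L(z)(Y^{j)}(z))_{j=0}^{s}$. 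In particular, $L(z)$ is invertible over $\C(z)$.

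Substituting into $F$ and regrouping one obtains
\[
G(z,\OY)=F(r(z),L(z)\OY^{T})=\sum_{\alpha}c_\alpha(r(z))\bigl(L(z)\OY^{T}\bigr)^\alpha=\sum_{\beta}d_\beta(z)\OY^{\beta},
\]
where $\OY=(Y,Y',\ldots,Y^{s)})$ and each $d_\beta(z)$ is a $\C(z)$-linear combination of the values $c_\alpha(r(z))$. For $(2)\Rightarrow(1)$ the conclusion is then immediate: the $c_\alpha\notin\oa$ already lie in $\C(x)$, so $c_\alpha(r(z))\in\C(z)$; the hypothesis tells us that the $c_\alpha\in\oa$ also satisfy $c_\alpha(r(z))\in\C(z)$; hence every $d_\beta(z)$ is a $\C(z)$-combination of rational functions, so $d_\beta(z)\in\C(z)$ and $G$ is algebraic.

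For $(1)\Rightarrow(2)$, invert the linear change. Because $L(z)$ is invertible over $\C(z)$, the substitution $\OY^{T}=L(z)^{-1}\ow^{T}$ yields
\[
F(r(z),\ow)=G\bigl(z,L(z)^{-1}\ow^{T}\bigr)=\sum_\beta d_\beta(z)\bigl(L(z)^{-1}\ow^{T}\bigr)^\beta,
\]
so, comparing coefficients of $\ow^\alpha$, each $c_\alpha(r(z))$ is a $\C(z)$-linear combination of the $d_\beta(z)$. If $G$ is algebraic then every $d_\beta(z)\in\C(z)$, hence every $c_\alpha(r(z))\in\C(z)$; in particular the components of $\oa(r(z))$ are rational. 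The only delicate point is ensuring that the back-substitution stays in $\C(z)$, and this is precisely the content of the determinant statement in Lemma \ref{lemma:transformacion}; once that invertibility is available, the equivalence is a straightforward linear-algebra argument on the coefficient spaces of $F$ and $G$.
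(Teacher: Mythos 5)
Your proposal is correct and follows essentially the same route as the paper: both directions reduce to Lemma \ref{lemma:transformacion}, using the invertibility of $L(z)$ over $\C(z)$ to run the coefficient comparison in reverse. You merely spell out in more detail the coefficient-collection step that the paper's proof leaves implicit.
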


\begin{proof}
	The implication $2\Rightarrow1$ follows from %the definition of $\mathcal{I}$ and
	Lemma \ref{lemma:transformacion}. Since the matrix $L(z)$ in that lemma is invertible (it is lower triangular with nonzero determinant), the same argument can be applied to $L^{-1}$ to prove $1\Rightarrow2$.
\end{proof}
%\begin{proof} \mnote{version anterior, ver si hay algo que rescatar}  (ii) implies (i) follows from the definition of $\mathcal{I}$ and Lemma \ref{lemma:transformacion}. Let us see that (i) implies (ii). For this
%purpose, let $\ow=(w_0,\ldots,w_n)$, and let \[G(z,\ow)=F(r(z),\ow) \,\,\,\text{and}\,\, %H(z,\ow)=F(r(z),L^{-1}(z) \,\ow^{T}).\]
%Since (i) holds, then $H\in \C(z)[\ow]$. We have that
%\[ G(z,\ow)=H(z,L\, \ow^{T}) \]
%Now, since $r\in \mathcal{I}$, we have that $L $ is a matrix over $\C(z)$. Therefore, $G(z,\ow)\in %\C(z)[\ow]$. But the coefficients of $G(z,\ow)$ w.r.t. $\ow$ are precisely those in $\oa(r(z))$, and hence (ii) holds.
%\end{proof}

In order to use Theorem \ref{th:equivalencia}, we define the radical parametrization \linebreak$\mP=(x,\oa(x))$ and consider its associated radical variety $\VP$ as well as its tower variety $\VT$, see Section \ref{sec:radical varieties}.

\begin{cor}\label{cor:cambio}
	Suppose that $\VT$ be rational. Then, it holds that
	%If $s(z)\in \mathcal{I}$ is such that $\odelta(s(z))$ is algebraic. It holds that
	\begin{enumerate}
		%\item $\VT$ is a rational curve.
		\item $\VP$ is a rational curve.
		\item If $\mathcal{Q}(z)=(r(z),\odelta(r(z)))$ is a rational parametrization of $\VT$, then ODE (\ref{eq:la ecuacion2}), obtained by applying the change $x=r(z)$ in the equation
		(\ref{eq:la ecuacion}), is algebraic.
		\item Assume that $\mathcal{Q}(z)$ in the previous item is invertible and let $h(\oz)$ be its inverse. If $Y(z)$ is a solution of  (\ref{eq:la ecuacion2}), then $Y(h(x,\odelta(x)))$ is
		a solution of (\ref{eq:la ecuacion}).
	\end{enumerate}
\end{cor}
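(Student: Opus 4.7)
My plan is to address the three parts of Corollary \ref{cor:cambio} in order, leveraging the factorization $\mP=\mR\circ\psi$ from diagram (\ref{eq:diagrama2}) together with the invertibility built into Lemma \ref{lemma:transformacion} and the criterion of Theorem \ref{th:equivalencia}.

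For item~(1), I will note that in the ODE setting we have $n=1$, so both $\VT$ and $\VP$ are one-dimensional irreducible varieties, i.e.\ curves. The map $\mR\colon\VT\to\VP$ in the diagram is dominant, since $\mP=\mR\circ\psi$ is dominant onto $\VP$. Composing any rational parametrization $\mathcal{Q}(z)$ of $\VT$ with $\mR$ therefore yields a dominant rational map from the affine line to $\VP$, that is, a rational parametrization of $\VP$, so $\VP$ is a rational curve.

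For item~(2), my strategy is to reduce to Theorem \ref{th:equivalencia}, which already equates algebraicity of the transformed ODE with rationality of the coefficients $\oa(r(z))$. From $\mP=(x,\oa(x))=\mR(\psi(x))=\mR(x,\odelta(x))$ with $\mR$ rational, each component of $\oa$ is a rational function $\mR_j(x,\odelta)$. Plugging in $x=r(z)$ and $\odelta=\odelta(r(z))$ and using that $\mathcal{Q}(z)=(r(z),\odelta(r(z)))$ is a rational parametrization of $\VT$, each $\oa_j(r(z))=\mR_j(r(z),\odelta(r(z)))$ is rational in $z$, and Theorem \ref{th:equivalencia} closes the argument.

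For item~(3), I will combine the invertibility $h\circ\mathcal{Q}=\mathrm{id}$ with the bidirectional equivalence of the two ODEs under the substitution. The identity $h(r(z),\odelta(r(z)))=z$ means that, setting $y(x):=Y(h(x,\odelta(x)))$, one obtains $y(r(z))=Y(h(\mathcal{Q}(z)))=Y(z)$, so $y$ is precisely the preimage of $Y$ under $x=r(z)$. Lemma \ref{lemma:transformacion} supplies the invertible lower triangular matrix $L(z)$ expressing the $y^{(k)}(r(z))$ as linear combinations of the $Y^{(j)}(z)$; because $L$ is invertible one may read the substitution from (\ref{eq:la ecuacion}) to (\ref{eq:la ecuacion2}) in either direction, so a solution $Y(z)$ of the transformed equation lifts to a solution $y(x)=Y(h(x,\odelta(x)))$ of the original. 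The main delicate point I foresee is in item~(3): one must verify that the branches of the radicals $\odelta(x)$ used in forming $h(x,\odelta(x))$ are compatible with those used to define $\mathcal{Q}(z)$, so that the inverse substitution is well defined on a nonempty open set; this should follow from the branch conventions set up in Section \ref{sec:radical varieties}, but it is the only step that is not purely formal.
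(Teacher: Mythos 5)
Your proposal is correct and follows essentially the same route as the paper: item (2) via the factorization $\mP=\mR\circ\psi$ and Theorem \ref{th:equivalencia}, and item (3) via the identity $z=h(\mathcal{Q}(z))$ combined with the invertibility of the substitution from Lemma \ref{lemma:transformacion}. The only cosmetic difference is in item (1), where you argue directly that composing a parametrization of $\VT$ with $\mR$ parametrizes $\VP$ (implicitly using L\"uroth to pass from unirational to rational), whereas the paper simply cites \cite[Theorem 4.9]{SSV}.
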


\begin{proof}\
	\begin{enumerate}
		\item Since $\VT$ is rational, by \cite[Theorem 4.9]{SSV}, $\VP$ is rational.
		\item Since the $\mR:\VT\to\VP$ in \eqref{eq:diagrama2} is rational, $\mR(\mQ(z))=(r(z),\oa(r(z))$ is a rational parametrization of $\VP$ and so $\oa(r(z))$ is algebraic with $r(z)\in \C(z)$. Now, the result follows from  Theorem \ref{th:equivalencia}.
		\item Let us consider from Section \ref{sec:intro} the equality $y(r(z))=Y(z)$. Then $Y(z)$ is a solution of \eqref{eq:la ecuacion2} when $y(x)$ is a solution of \eqref{eq:la ecuacion}. Since $z=h(\mQ(z))=h(r(z),\odelta(r(z)))$ we have
		\[
		y(r(z))=Y(h(r(z),\odelta(r(z)))).
		\]
		Since $r(z)$ is not constant (otherwise, $\mQ$ would also be constant), we can call $x=r(z)$. We get now $y(x)=Y(h(x,\odelta(x)))$.
	\end{enumerate}
\end{proof}

All previous ideas are algorithmically treated next. For this purpose, we will use two auxiliary algorithms (see \cite{SWP} for details):
\begin{itemize}
	\item \textsf{RatParamAlg} will  be an algorithm to decide whether an algebraic curve is rational and, in the affirmative case,
	to compute a proper (i.e. invertible) rational parametrization,
	\item \textsf{InvParamAlg} will be an algorithm to compute the inverse of a proper rational curve parametrization.
\end{itemize}

\begin{alg}[Transforming ODEs with radical coefficients	into rational coefficients]
	\begin{algorithmic}[1]
		$ $
		\renewcommand{\algorithmicrequire}{\textbf{Input:}\ }
		\renewcommand{\algorithmicensure}{\textbf{Output:}\ }

		\algorithmicrequire An ODE as in (\ref{eq:la ecuacion}).
		
		\algorithmicensure One of the following:
		\begin{itemize}
			\item a rational change of variable $x=r(z)$ such that, when applied to \eqref{eq:la ecuacion}, it yields an algebraic ODE as in \eqref{eq:la ecuacion2}; and a rational function $h(\oz)$ such that if $Y(z)$ is a solution of \eqref{eq:la ecuacion2}, then $Y(h(x,\odelta(x)))$ is a solution of \eqref{eq:la ecuacion}.
			\item ``No such rational reparametrization exists''
			\item ``No answer''
		\end{itemize}
		\STATE Collect in the tuple $\oa$ all nonrational coefficients of (\ref{eq:la ecuacion}).
		\STATE Compute the tower variety $\VT$ of $\mP=(x,\oa)$ (see \cite[Remark 4.7]{SSV}) and apply \textsf{RatParamAlg} to it.
		\IF {$\VT$ has a rational parametrization}
		\STATE Compute an invertible parametrization $\mathcal{Q}(z)=(r(z),\odelta(r(z)))$ and its inverse $h=\mathsf{InvParamAlg}(\mathcal{Q})$.
		\RETURN $x=r(z)$ and $h$.
		\ELSE
		\IF {The tracing index of $\mP$ is 1}
		\RETURN ``No such rational change of variables exists''
		\ELSE
		\RETURN ``No answer''
		\ENDIF
		\ENDIF
	\end{algorithmic}
\end{alg}

\begin{rem}
	%\mnote{A Carlos y Jorge (dice Carlos que, sobre todo, a Jorge) nos da la impresión de que, para este caso en particular, el \'indice de trazado podr\'ia ser siempre 1, con lo que la respuesta "No answer" no se dar\'ia nunca}	
	If the tracing index of $\mP$ is 1, $\VP$ and $\VT$ are birationally equivalent; then, if $\VT$ is not rational, then $\VP$ cannot be rational. Therefore there exists no rational $r(z)$ that reparametrizes \eqref{eq:la ecuacion} into an algebraic ODE. See \cite[Theorem 4.11, (i)]{SSV}.
	
	%Moreover, it probably can be proved that, when the tracing index is not 1, one can choose a better tower. In general, one would have several inverse images for the general $(x,\oa(x))$ of shape $(x,\odelta(x),\oa(x))$. Since $\oa$ can be regarded as a polynomial in $\odelta$ with coefficients in $\C(x)$, we get the conjugates of $\odelta(x)$ classified by their images in $\oa(x)$. Seeing the Galois group of the extension as a group of permutations of the conjugates of $\odelta(x)$, we get that we can consider a smaller extension that will give tracing index equal to 1.
	
	On the other hand, when the tracing index is $T>1$, points in $\VP$ would generically have $T$ preimages by $\pi$ (see diagram \eqref{eq:diagrama2}), and they have the shape $(p,\odelta(p),\oa(p))$; the second block in that shape would consist of all the conjugates of $p$ by the $\odelta$, and we consider it possible that, by working on the Galois group of the extension, one can find more information on the radical parametrization, possibly even being able to find a simpler tower of radicals to convert to.
\end{rem}

\begin{rem}\label{remark:detalles ejemplo intro}
	Revisiting Example \ref{ex:intro} from the introduction, one can observe that the tower curve $\VT$ can be defined by $(x,\delta_1(x),\delta_2(x))=(x,\sqrt{x},\sqrt{x+1}\,)$. Then $\VT$ can be parametrized rationally by
	\[Q(z)=(\,r(z),\delta_1(r(z)),\delta_2(r(z))\,)=\left(\frac{(z^2-1)^2}{4z^2}, \frac{z^2-1}{2z}, \frac{z^2+1}{2z}\right),\]
	whose inverse is $h(A,B,C)=B+C$. The substitution $Y(z)=y(r(z))$ is the one we used to obtain \eqref{eq:intro_algebraica}. We found the solution \eqref{eq:intro_solucion_racional}, which means that $y(x)=Y(\, h(x,\delta_1(x),\delta_2(x))\,)=Y\left(\sqrt{x}+\sqrt{x+1}\,\right)$ solves \eqref{eq:intro_radicales}.
\end{rem}

\begin{ex}
	The following equation is based on \cite[Part I, Section C, 1.525]{Kamke}: for a parameter $a$, find $y(x)$ such that
	% the case a=1 appears in a previous book cited by Kamke; the case a=4 is in ChauWinkler2010 who take it from "The general solution of an ordinary differential equation" (Hubert, ISSAC 1996)
	\[
	8(y')^3(x+1)^{3/2} - 2a(x+1)yy' + 2ay^2 = 0.
	\]
	Maple 2018 finds a solution after hundreds of seconds, and it is not explicit (it involves \texttt{RootOf} and integral signs). On the other hand, the coefficients have $\delta=\sqrt{x+1}$ so that we can consider $\VT$, the closure of the image of $x\mapsto(x,\sqrt{x+1})$. It is clearly parametrizable, for example as $\mQ(z)=(z^2-1,z)$, so the change $x=z^2-1$ should make all the coefficients rational; indeed, in terms of $Y(z)=y(z^2-1)$, we obtain
	\[
	(Y')^3 - azYY' + 2aY^2 = 0.
	\]
	Maple immediately finds the general solution (see also \cite[Example 7.1]{ChauWinkler2010})
	\[
	Y(z) = \frac{z^2}{4C} - \frac{z}{2aC^2} + \frac{1}{4a^2C^3}.
	\]
	To return to the variable $x$, we use the inverse of $\mQ(z)$, namely $h(A,B)=B$; the substitution is then $z=\delta(x)$, so
	\[
	y(x) = \frac{x+1}{4C} - \frac{\sqrt{x+1}}{2aC^2} + \frac{1}{4a^2C^3}
	\]
	is the general solution.
\end{ex}

\begin{ex}
	We now take the equation \cite[Part I, Section C, 6.166]{Kamke}:
	\[ayy'' +b \left(y'  \right) ^{2}-{\frac {y y' }{\sqrt {{c}^{2}+{x}^{2}}}}=0.\]
	Maple 2018 gives the solution in terms of the generalized hypergeometric function. We consider the tower curve $\VT$, defined by $(x,\delta(x))=\left(x,\sqrt{c^2+x^2}\right)$. Then, $\VT$ (the hyperbola $\delta^2-x^2=c^2$) can be rationally parametrized by
	\[
	Q(z)=\left(r(z),\delta(r(z))\right)=\left({\frac {-{c}^{2}+{z}^{2}}{2z}}, {\frac {{c}^{2}+{z}^{2}}{2z}}\right).
	\]
	We now perform the change $x=r(z)$ and  we get, for $Y(z)=y(r(z))$, the equation
	\[(  a{c}^{2}z+  a{z}^{3})YY''+
	(b{c}^{2}z+b{z}^{3})(Y') ^{2}+
	(2 a{c}^{2}- {c}^{2}- {z}^{2})YY' =0,\]
	whose solution $Y(z)$, given by Maple 2018, is
	\[
	\left(
	\frac{\left( \left( -\sqrt [a]{z}{a}^{2}{c}^{2}-\sqrt [a]{z}a{c}^{2}+{z}^{{\frac {
					1+2\,a}{a}}}{a}^{2}-{z}^{{\frac {1+2\,a}{a}}}a \right)  C_1+
		\left( {a}^{2}z-z \right) C_2\right)
	}
	{az(1+a)(a-1)\,/\,(b+a)}
	\right) ^{\displaystyle\frac{a}{b+a}}.
	\]
	The inverse of $Q$ is defined by $h(A,B)=A+B$, so we substitute\linebreak $z=x+\sqrt{c^2+x^2}$ to get $y(x)=Y(h(x,\delta(x)))=Y\left(x+\sqrt{c^2+x^2}\,\right)$, that solves the original equation.
\end{ex}

\section{Systems of linear ODEs}

In this section we extend the results of the previous section to the case of systems of linear ODEs.

\begin{thm}
	Consider a system of linear ODEs with unknowns\linebreak $y_1(x),\ldots,y_n(x)$ and coefficients $a^{(k)}_{i,j}(x)\in\F_m$:
	\[\renewcommand{\arraystretch}{1.5}
	\begin{array}{c}
	a_{1,0}^{(1)}\cdot y_1 + a_{1,1}^{(1)}\cdot {y_1}' + a_{1,2}^{(1)}\cdot {y_1}'' + \cdots + a_{2,0}^{(1)}\cdot y_2 + \cdots + a_{n,0}^{(1)}\cdot y_n +\cdots = b^{(1)} \\
	a_{1,0}^{(2)}\cdot y_1 + a_{1,1}^{(2)}\cdot {y_1}' + a_{1,2}^{(2)}\cdot {y_1}'' + \cdots + a_{2,0}^{(2)}\cdot y_2 + \cdots + a_{n,0}^{(2)}\cdot y_n +\cdots = b^{(2)} \\
	\vdots \\
	a_{1,0}^{(l)}\cdot y_1 + a_{1,1}^{(l)}\cdot {y_1}' + a_{1,2}^{(l)}\cdot {y_1}'' + \cdots + a_{2,0}^{(l)}\cdot y_2 + \cdots + a_{n,0}^{(l)}\cdot y_n +\cdots = b^{(l)} \\
	\end{array}
	\]
	(the superindex denotes the equation, and the subindexes denote the unknown and its derivation order). Then, for every $r(z)\in\C(z)$ the following are equivalent:
	\begin{enumerate}
		\item All of the $a_{i,j}^{(k)}(r(z))$ are rational.
		\item The system resulting from the reparametrization $x=r(z)$ has rational coefficients.
	\end{enumerate}
\end{thm}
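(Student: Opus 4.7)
The plan is to imitate the proof of Theorem \ref{th:equivalencia} with essentially no new ideas: the only adjustment is bookkeeping, because now there are several unknowns $y_1,\ldots,y_n$, each of which is acted on independently by the change of variable $x=r(z)$.

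First I would apply Lemma \ref{lemma:transformacion} separately to each unknown. Setting $Y_i(z) = y_i(r(z))$, the lemma gives, for each $i\in\{1,\ldots,n\}$, a lower triangular matrix $L(z)$ (the same for all $i$, since it depends only on $r(z)$ and its derivatives) with entries in $\C[r'(z),\ldots,r^{(s)}(z),1/r'(z)]$ such that
\[
\bigl(y_i(r(z)),\, y_i'(r(z)),\, \ldots,\, y_i^{(s)}(r(z))\bigr)^{\mathsf{T}} = L(z)\,\bigl(Y_i(z),\, Y_i'(z),\, \ldots,\, Y_i^{(s)}(z)\bigr)^{\mathsf{T}}.
\]
In particular, every $y_i^{(j)}(r(z))$ becomes a $\C(z)$-linear combination of $Y_i(z),\ldots,Y_i^{(j)}(z)$.

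For the implication $(1)\Rightarrow(2)$, I would substitute these expressions into the $k$-th equation. After collecting terms, the coefficient of $Y_i^{(j)}(z)$ in the new $k$-th equation is a sum of products of the form $a^{(k)}_{i,\mu}(r(z))\cdot L_{\mu,j}(z)$ with $\mu\ge j$. Since the entries of $L$ lie in $\C(z)$ and, by hypothesis, every $a^{(k)}_{i,\mu}(r(z))$ is rational, each such coefficient is rational. The same argument applies to the right hand side $b^{(k)}(r(z))$ (which may be regarded as one more coefficient with no associated $Y_i^{(j)}$), so the reparametrized system has rational coefficients.

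For $(2)\Rightarrow(1)$, I would exploit the invertibility asserted in Lemma \ref{lemma:transformacion}: $L(z)$ is lower triangular and $\det(L)$ is a power of $1/r'(z)$, hence nonzero, so $L(z)^{-1}$ exists and has entries in $\C(z)$. Then
\[
\bigl(Y_i(z),\ldots,Y_i^{(s)}(z)\bigr)^{\mathsf{T}} = L(z)^{-1}\bigl(y_i(r(z)),\ldots,y_i^{(s)}(r(z))\bigr)^{\mathsf{T}},
\]
so the original coefficients $a^{(k)}_{i,j}(r(z))$ are expressible as $\C(z)$-linear combinations of the (rational, by hypothesis) coefficients of the reparametrized system, and are therefore themselves rational.

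No step is really an obstacle; the only mildly delicate point is to make sure that, in the $(2)\Rightarrow(1)$ direction, one can genuinely read off each $a^{(k)}_{i,j}(r(z))$ separately, rather than only certain linear combinations of them. This works because, once we fix a single unknown $y_i$, the corresponding block of coefficients in the $k$-th equation is exactly what multiplies the vector $(y_i(r(z)),\ldots,y_i^{(s)}(r(z)))$, and the inversion of $L$ recovers each coefficient individually.
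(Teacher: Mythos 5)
Your proposal is correct and follows exactly the paper's approach: the paper's proof is the single sentence ``It is a consequence of applying Lemma \ref{lemma:transformacion} to each variable,'' and your argument simply spells out the details of that application, including the correct observation that linearity prevents cross-mixing between the blocks belonging to different unknowns.
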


\begin{proof}
	It is a consequence of applying Lemma \ref{lemma:transformacion} to each variable.
\end{proof}

\begin{rem}
	The theorem also holds for slightly more general systems of ODEs: those where no monomial contains a product of derivatives of $y_i$ and $y_j$ for $i\neq j$. We do now know if the result still holds for systems of polynomial ODEs.
\end{rem}

The results of the previous section can be adapted. Namely, by defining a radical parametrization $\mP=(x,\oa)$ where $\oa$ is a tuple of all the non-rational coefficients in any order, we can work with $\VP$ and $\VT$ in the same way.

\begin{cor}
	Suppose that $\VT$ is rational and let $\mathcal{Q}(z)=(r(z),\odelta(r(z)))$ be an invertible rational parametrization of $\VT$ with inverse $h(\oz)$. Then the linear system obtained by applying the change $x=r(z)$ to the original one is algebraic. Also, if $Y_1(z),\ldots,Y_n(z)$ is a solution of the new system, then \linebreak$Y_1(h(x,\odelta(x))),\ldots,Y_n(h(x,\odelta(x)))$ is
	a solution of the original system.
\end{cor}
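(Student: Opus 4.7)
The plan is to mirror the proof of Corollary \ref{cor:cambio} step by step, treating the linear system as a collection of linear equations, each of which is a very simple instance of the kind of polynomial ODE handled in Section 3. The only genuinely new ingredient needed is the linear-system version of Theorem \ref{th:equivalencia}, and that has just been proved as the preceding theorem in this section.

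First I would invoke that theorem to reduce the ``algebraic'' claim to a purely geometric statement: the transformed system has rational coefficients if and only if every $a^{(k)}_{i,j}(r(z))$ is a rational function of $z$. To verify this, I collect all the $a^{(k)}_{i,j}(x)$ into the tuple $\oa$ defining the radical parametrization $\mP=(x,\oa)$. Since $\mathcal{Q}(z)=(r(z),\odelta(r(z)))$ is, by hypothesis, a rational parametrization of $\VT$, and the map $\mR\colon \VT\to\VP$ in diagram \eqref{eq:diagrama2} is rational, the composition $\mR(\mathcal{Q}(z))=(r(z),\oa(r(z)))$ is a rational parametrization of $\VP$. In particular every component of $\oa(r(z))$ lies in $\C(z)$, which is exactly condition (1) of the preceding theorem; this delivers the first assertion.

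For the second assertion I would proceed exactly as in item (3) of Corollary \ref{cor:cambio}. By construction $Y_i(z)=y_i(r(z))$, so if $(Y_1,\ldots,Y_n)$ solves the reparametrized system, then $(y_1\circ r,\ldots,y_n\circ r)$ is a formal solution of it; conversely, Lemma \ref{lemma:transformacion} applied to each $y_i$ shows that the original and reparametrized systems are related by a component-wise invertible triangular change of derivatives, so $(y_1,\ldots,y_n)$ solves the original system whenever $(Y_1,\ldots,Y_n)$ solves the new one. It then remains to express $z$ in terms of $x$: since $h$ is the inverse of $\mathcal{Q}$, we have $z=h(\mathcal{Q}(z))=h(r(z),\odelta(r(z)))$, and because $r$ is non-constant (otherwise $\mathcal{Q}$ would be constant as well, contradicting invertibility) we can set $x=r(z)$ to conclude $z=h(x,\odelta(x))$, giving $y_i(x)=Y_i(h(x,\odelta(x)))$.

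I do not foresee any real obstacle: the work of adapting the scalar case has been absorbed into the previous theorem of this section via Lemma \ref{lemma:transformacion}, which already handles each unknown independently because no equation mixes derivatives of distinct unknowns in a nonlinear way. The one point worth stating explicitly in the write-up is that linearity is preserved because each $y_i^{k)}(r(z))$ becomes a $\C(z)$-linear combination of $Y_i(z),Y_i'(z),\ldots,Y_i^{k)}(z)$ (from Lemma \ref{lemma:transformacion}), so multiplying by $a^{(k)}_{i,j}(r(z))\in\C(z)$ keeps the transformed equation linear in the $Y_i$ and their derivatives with rational coefficients.
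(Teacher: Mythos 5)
Your proposal is correct and follows exactly the route the paper intends: it leaves this corollary unproved precisely because it is the verbatim adaptation of Corollary \ref{cor:cambio} (using $\mR\circ\mathcal{Q}$ to get rationality of $\oa(r(z))$, the preceding theorem in place of Theorem \ref{th:equivalencia}, and the identity $z=h(r(z),\odelta(r(z)))$ for the back-substitution), which is what you carry out. Your closing remark that Lemma \ref{lemma:transformacion} keeps each equation linear in the $Y_i$ is a worthwhile explicit addition, but it introduces no new idea beyond the paper's argument.
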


\section{PDEs}

In this section we present similar results for the case of PDEs. We will denote partial derivatives with subindexes: $\displaystyle y_{i_1\ldots i_s}(x_1,\ldots,x_n) = \frac{\partial^s y}{\partial x_{i_1}\cdots\partial x_{i_s}}$. We will consider a PDE
\begin{equation}\label{eq:PDE original}
F(x_1,\ldots,x_n,y,y_1(\ox),\ldots,y_n(\ox),y_{11}(\ox),\ldots)=0
\end{equation}
where $F$ is a polynomial with coefficients in a radical tower $\F_m$ over $\C(\ox)$, and the problem of converting it to a PDE with a change of variables $x_i=r_i(z_1,\ldots,z_n)$
\begin{equation}\label{eq:PDE cambiada}
G(r_1(\oz),\ldots,r_n(\oz),Y(\oz),Y_1(\oz),\ldots,Y_n(\oz),Y_{11}(\oz),\ldots)=0
\end{equation}
which is algebraic (that is, the coefficients of $G$ are rational).

\begin{defn}
	Let $k$ be a positive integer and let $f$ be continuously differentiable of order $k$. Its \emph{generalized gradient vector up to order $k$} is the column vector
	\[
	\kpartial{k}{f} = (f,f_1,\ldots,f_n,\ f_{11},f_{12},\ldots,f_{1n},f_{22},\ldots,f_{nn},\ \ldots,\ldots,\ f_{n\cdots n})^T
	\]
	so that:
	\begin{itemize}
		\item it contains the derivatives of order 0, then order 1, etc. up to order $k$, sorted lexicographically within the same derivation order;
		%\item when two derivatives are the same by equality of mixed partials, only the first one (lexicographically) is included. For example, since $f_{12}=f_{21}$, we include the first one but not the second one.
		\item when equality of mixed partials applies, only the first one (lexicographically) is included. For example, since $f_{12}=f_{21}$, we include the first one but not the second one.
	\end{itemize}
\end{defn}

\begin{lem}\label{lemma:transformacion PDE}
	Let $r_i\in\C(\oz)$, $i=1,\ldots,n$ be a change of variables, i.e. whose Jacobian is invertible. For any $y(x_1,\ldots,x_n)$ define $Y(\oz)=y(r_1(\oz),\ldots,r_n(\oz))$. Then
	\[
	\kpartial{k}{Y}(\oz) = M(\oz) \cdot \kpartial{k}{y}(\orr(\oz))
	\]
	where $M(\oz)$ is an invertible matrix whose entries are polynomials in the partial derivatives of the $r_i$.
\end{lem}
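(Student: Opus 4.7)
The plan is a multivariable generalization of the proof of Lemma~\ref{lemma:transformacion}: induction on the derivation order combined with a block-triangular analysis of $M$. The base case $k=0$ is trivial, since $Y(\oz)=y(\orr(\oz))$ gives $M=(1)$.

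For the inductive step, I would prove by induction on $|\alpha|$ that for every multi-index $\alpha$,
\[
Y_\alpha(\oz) \;=\; \sum_{|\beta|\le |\alpha|} P_{\alpha,\beta}(\oz)\, y_\beta(\orr(\oz))
\]
with each $P_{\alpha,\beta}$ polynomial in the partial derivatives of the $r_j$. Applying $\partial/\partial z_i$ to an equation of this form and using the multivariable chain rule on the right-hand side yields
\[
Y_{\alpha+\canonicalvector{i}} \;=\; \sum_{|\beta|\le |\alpha|} \frac{\partial P_{\alpha,\beta}}{\partial z_i}\, y_\beta(\orr) \;+\; \sum_{|\beta|\le |\alpha|}\sum_{j=1}^{n} P_{\alpha,\beta}\,\frac{\partial r_j}{\partial z_i}\, y_{\beta+\canonicalvector{j}}(\orr),
\]
which is again of the prescribed form, with the derivation order of $y$ on the right never exceeding $|\alpha|+1$. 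Crucially, no denominators appear because the chain rule is applied in the forward direction, from $\oz$ to $\ox$; so the entries of $M$ are genuine polynomials in the partial derivatives of the $r_j$, as claimed.

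For invertibility, I would order the rows and columns of $M$ first by $|\alpha|$ (and lexicographically within each order, matching the convention of $\kpartial{k}{\cdot}$); the induction above shows that $M$ is block lower triangular, since the derivation order of $y_\beta$ never exceeds that of $Y_\alpha$. The diagonal block for order $s$ captures only the top-order Faà di Bruno contribution,
\[
Y_{i_1\cdots i_s}\;=\;\sum_{j_1,\ldots,j_s}\frac{\partial r_{j_1}}{\partial z_{i_1}}\cdots\frac{\partial r_{j_s}}{\partial z_{i_s}}\, y_{j_1\cdots j_s}(\orr) \;+\; \text{(terms with $|\beta|<s$)},
\]
and modulo the identification of mixed partials is the $s$-th symmetric power $S^s(J)$ of the Jacobian $J=\left(\partial r_j/\partial z_i\right)_{i,j}$. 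Its determinant is a positive power of $\det J$, which is non-zero by hypothesis, so every diagonal block is invertible and hence so is $M$.

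The main obstacle is the combinatorial bookkeeping: one must verify that the ``lower-order-in-$y$'' terms produced at each induction step feed only into strictly subdiagonal blocks, never above the diagonal. This is transparent from the induction—each differentiation either leaves $|\beta|$ unchanged (subdiagonal) or raises it by exactly one, matching the increase $|\alpha|\mapsto|\alpha|+1$ (diagonal)—but requires careful multi-index notation to state cleanly. Once that is settled, the identification of the diagonal block with $S^s(J)$ and the resulting invertibility are immediate from linear algebra.
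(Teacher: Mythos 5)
Your proof is correct, but your invertibility argument takes a genuinely different route from the paper's. The first part (polynomial entries of $M$ via the forward chain rule, induction on the order of the multi-index) is the same in both. For invertibility, however, the paper does not analyze the internal structure of $M$: it first builds a companion matrix $N$, with entries \emph{rational} in the partial derivatives of the $r_i$, satisfying $\kpartial{k}{y}(\orr(\oz)) = N(\oz)\cdot\kpartial{k}{Y}(\oz)$ (starting from the inverted bordered Jacobian at order one), and then proves $M\cdot N=Id$ column by column: it evaluates the identity $\kpartial{k}{Y}=(MN)\,\kpartial{k}{Y}$ on the monomials $Y=z_1^{u_1}\cdots z_n^{u_n}$ --- using the inverse function theorem to guarantee that any such $Y$ is realizable as $y(\orr)$ for some $y$ --- and inducts on the derivation order to conclude $\column{i}=\canonicalvector{i}$ for every $i$. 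Your alternative, exhibiting $M$ as block lower triangular with the order-$s$ diagonal block equal (up to the diagonal rescaling coming from keeping one representative per class of mixed partials) to the symmetric power $S^s(J)$, is more structural and shorter, and as a bonus identifies $\det M$ as an explicit nonzero power of $\det J$, which generalizes the final assertion of Lemma \ref{lemma:transformacion}; the price is that you must cite or prove the fact that $\det S^s(J)$ is a power of $\det J$ (equivalently, that $S^s$ preserves invertibility), and you should spell out the harmless multinomial rescaling in the identification of the diagonal block. The paper's monomial-testing argument is longer but self-contained, and it produces the inverse $N$ explicitly with entries rational in the derivatives of the $r_i$, which is what is invoked later in Theorem \ref{th:equivalencia PDE}; your argument recovers the same consequence since a polynomial matrix with nonzero determinant has an inverse with rational entries, so the downstream applications go through either way.
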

\begin{proof}
	%The result follows by induction on the derivation order, by application of the chain rule to the definition of $Y$.
	
	Each derivative of $Y$ is computed, by repeated application of the chain rule, as a linear combination of the corresponding derivatives of $y(\orr)$. This proves the claim on the elements of $M$, and only the invertibility of $M$ remains to be proven, which we do next.
	
	First, we establish a sort of reciprocal formula. Note that
	\[
	\kpartial{1}{y}(\orr(\oz)) = \left(\begin{array}{c|c}
	1 & 0 \ldots 0 \\
	\hline
	\begin{array}{c}
	0 \\ \vdots \\ 0 \\
	\end{array} & (\mathrm{J}_{\orr})^T
	\end{array}\right)^{-1} \cdot\  \kpartial{1}{Y}(\oz),
	\]
	which this is possible because the Jacobian matrix $\mathrm{J}_{\orr}$ is invertible. Reasoning as above, we obtain linear relations
	\[
	\kpartial{k}{y}(\orr(\oz)) = N(\oz) \cdot \kpartial{k}{Y}(\oz),
	\]
	where the entries of $N$ are rational functions in the partial derivatives of $\orr$.
	
	We will finish the proof by showing that $M\cdot N=Id$. From the two equations,
	\begin{equation}\label{eq:MN=Id}
	\kpartial{k}{Y} = (M\cdot N)\ \kpartial{k}{Y}.
	\end{equation}
	Let us write $\canonicalvector{i}$ for a column vector whose elements are all zero except for a one at the $i$-th position (i.e. the $i$-th vector of the canonical basis).
	
	Note that, by the inverse function theorem applied generically to $\orr$, for any continuously differentiable $g$ there exists $\tilde{g}$ such that $\tilde{g}(\orr)=g$. This allows us to prescribe $Y(\oz)$ so that it will be equal to $y(\orr)$ for some $y$.
	
	Therefore, it makes sense to consider a monomial $Y=z_1^{u_1}\cdots z_n^{u_n}$. Since its degree is $\sum u_i$:
	\begin{itemize}
		\item its derivatives of order $>\sum u_i$ are zero;
		\item its derivatives of order $\sum u_i$ are zero except one, which is a nonzero constant.
	\end{itemize}
	%as a result, $\kpartial{k}{Y}=c\cdot\mathbf{e}_i$ for some nonzero constant $c$ and some index $i$.
	
	Now we will do an induction on the order $h$ of derivation in order to show that the $i$-th column of $M\cdot N$, which we will denote by $\column{i}$, is equal to $\canonicalvector{i}$. We will make use of the fact that
	\[
	M\cdot N\cdot\left(\hspace{-1ex}\begin{array}{c} \lambda_1 \\ \lambda_2 \\ \vdots \\ \end{array}\hspace{-1ex}\right) = \lambda_1\column{1}+\lambda_2\column{2}+\cdots
	\]
	\begin{itemize}
		\item Case $h=0$: let $Y=1$. Then $\kpartial{k}{Y} = \canonicalvector{1}$, and from \eqref{eq:MN=Id} we deduce that $\column{1}=\canonicalvector{1}$.
		\item Case $h=1$: let $Y=z_i$. Then $\kpartial{k}{Y} = z_i\cdot\canonicalvector{1}+\canonicalvector{i+1}$. From \eqref{eq:MN=Id} we have that $z_i\cdot\canonicalvector{1}+\canonicalvector{i+1} = z_i\cdot\column{1}+\column{i+1}$. Since $\column{1}=\canonicalvector{1}$ by the previous case, then $\column{i+1}=\canonicalvector{i+1}$.
		\item Assume that the columns corresponding to orders of derivation $<h$ (let us say that there are $q$ of them) have been proven to be $\canonicalvector{1},\ldots,\canonicalvector{q}$. We proceed as in the previous case. Let $Y$ be a monomial of degree $h$. Then, by a previous observation, $\kpartial{k}{Y} = w_1\canonicalvector{1}+\cdots+w_q\canonicalvector{q}+c\canonicalvector{q'}$, where the $w_i$ are continuously differentiable (they are lower-order derivatives of $Y$), $c$ is a nonzero constant, and $q'>q$ is the position in the column vector corresponding to the only derivative of $Y$ of order $h$ that is nonzero (because for any index greater than $q$ the corresponding entry in the gradient is zero, except for position $q'$). Once more, by \eqref{eq:MN=Id},
		\[
		w_1\canonicalvector{1}+\cdots+w_q\canonicalvector{q}+c\canonicalvector{q'}=w_1\column{1}+\cdots+w_q\column{q}+c\column{q'}.
		\]
		By the induction hypothesis, $\column{i}=\canonicalvector{i}$ for $i\leq q$. Therefore, $\column{q'}=\canonicalvector{q'}$. Finally, for every column corresponding to a derivative of order $h$ there is such a monomial, and the induction is concluded; it follows that $M$ is invertible (and its inverse is precisely $N$).
	\end{itemize}
\end{proof}

Now we follow a path analogous to that of ODEs.

\begin{thm}\label{th:equivalencia PDE}
	With the previous notations, the following statements are equivalent:
	\begin{enumerate}
		\item The PDE \eqref{eq:PDE cambiada} is algebraic.
		\item All the components of the vector $\oa(\orr(\oz))$ are rational, where $\oa(\ox)$ are the non-rational coefficients of the PDE \eqref{eq:PDE original}.
	\end{enumerate}
\end{thm}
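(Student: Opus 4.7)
The plan is to mirror the proof of Theorem \ref{th:equivalencia}, replacing Lemma \ref{lemma:transformacion} with its PDE counterpart Lemma \ref{lemma:transformacion PDE}. The key mechanism in both cases is that the change of variables induces an invertible $\C(\oz)$-linear transformation on coefficient vectors; rationality is therefore preserved in both directions.

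First I would apply Lemma \ref{lemma:transformacion PDE} to obtain the invertible matrix $M(\oz)$ with rational entries and its inverse $N(\oz)=M(\oz)^{-1}$, together with the identity $\kpartial{k}{y}(\orr(\oz)) = N(\oz)\cdot \kpartial{k}{Y}(\oz)$. Plugging this into \eqref{eq:PDE original} evaluated at $\ox=\orr(\oz)$ yields \eqref{eq:PDE cambiada}: each monomial in the partial derivatives of $y$ is transformed into a polynomial in the partial derivatives of $Y$ whose coefficients are polynomial combinations of the entries of $N(\oz)$, hence rational in $\oz$. Thus the coefficients of $G$ are obtained from the coefficients of $F$ evaluated at $\orr(\oz)$ by a linear map over $\C(\oz)$. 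Because $N$ is invertible, the reverse substitution (using $M$) yields the mutually inverse map, so this is a linear isomorphism between the two coefficient vectors.

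For $(2)\Rightarrow(1)$: the rational coefficients of $F$ (those in $\C(\ox)$) compose with the rational $\orr(\oz)$ to rational functions of $\oz$, and by hypothesis the entries of $\oa(\orr(\oz))$ are also rational; hence all coefficients of $F$ at $\ox=\orr(\oz)$ lie in $\C(\oz)$, and therefore so do the coefficients of $G$. For $(1)\Rightarrow(2)$: if all coefficients of $G$ lie in $\C(\oz)$, then the $\C(\oz)$-linear combinations that recover the coefficients of $F(\orr(\oz),\cdot)$ are rational in $\oz$; in particular each component of $\oa(\orr(\oz))$ is rational.

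The main obstacle I anticipate is the clean verification that the induced map on coefficient vectors is really an isomorphism of $\C(\oz)$-vector spaces. This is not automatic from the invertibility of $M$ alone: one must check that the substitution $\kpartial{k}{y}\mapsto N\cdot\kpartial{k}{Y}$ extends to a $\C(\oz)$-algebra automorphism of the polynomial ring $\C(\oz)[\kpartial{k}{Y}]$ that preserves the filtration by total degree. Granting this (which follows because linear substitutions with invertible matrix yield mutually inverse algebra automorphisms), the equivalence drops out immediately, exactly as in the ODE case.
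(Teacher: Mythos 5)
Your proposal is correct and follows essentially the same route as the paper, whose proof is a one-line appeal to the matrix $M(\oz)$ and its inverse from Lemma \ref{lemma:transformacion PDE}, exactly mirroring the argument for Theorem \ref{th:equivalencia}. Your additional remark that the invertible linear substitution on the derivative variables induces mutually inverse, degree-preserving $\C(\oz)$-algebra automorphisms (hence a linear isomorphism between the coefficient vectors of $F(\orr(\oz),\cdot)$ and $G$) is a correct and slightly more explicit justification of the step the paper leaves implicit.
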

\begin{proof}
	As in Lemma \ref{lemma:transformacion}, both implications are proven by using the matrix $M(\oz)$ and its inverse from Lemma \ref{lemma:transformacion PDE}.
\end{proof}

\begin{cor}
	Suppose that $\VT$ be unirational (i.e. rationally parametrizable). Then, it holds that
	%If $s(z)\in \mathcal{I}$ is such that $\odelta(s(z))$ is algebraic. It holds that
	\begin{enumerate}
		%\item $\VT$ is a rational curve.
		\item $\VP$ is unirational.
		\item If $\mathcal{Q}(\oz)=(\orr(\oz),\odelta(\orr(\oz)))$ is a rational parametrization of $\VT$, then \eqref{eq:PDE cambiada}, obtained by applying the change $\ox=\orr(\oz)$ in \eqref{eq:PDE original}, is algebraic.
		\item Assume that $\mathcal{Q}(\oz)$ in the previous item is invertible and let $\overline{h}(\oz)$ be its inverse. If $Y(\oz)$ is a solution of \eqref{eq:PDE cambiada}, then $Y(\overline{h}(\ox,\odelta(\ox)))$ is a solution of \eqref{eq:PDE original}.
	\end{enumerate}
\end{cor}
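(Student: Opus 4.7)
My plan is to follow the template set by Corollary \ref{cor:cambio} in the ODE section, replacing the univariate tools with their PDE analogues: Theorem \ref{th:equivalencia PDE} and Lemma \ref{lemma:transformacion PDE}, together with the commutative diagram \eqref{eq:diagrama2}. The three parts should be treated in order, as each feeds into the next.

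For part (1), the map $\mR:\VT\to\VP$ in \eqref{eq:diagrama2} is dominant and rational (since $\mP=\mR\circ\psi$ is dominant onto $\VP$). Composing with the given rational parametrization $\mQ$ of $\VT$ produces $\mR\circ\mQ:\C^n\dashrightarrow\VP$, given explicitly by $\mR(\mQ(\oz))=(\orr(\oz),\oa(\orr(\oz)))$. This is a rational dominant map, so $\VP$ is unirational. For part (2), the same computation shows that every component of $\oa(\orr(\oz))$ lies in $\C(\oz)$; that is, the non-rational coefficients of \eqref{eq:PDE original} become rational after the substitution $\ox=\orr(\oz)$. Invoking Theorem \ref{th:equivalencia PDE} then yields that \eqref{eq:PDE cambiada} is algebraic.

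For part (3), I would use the invertibility of $\mQ$ with inverse $\overline{h}$: by construction $\overline{h}(\orr(\oz),\odelta(\orr(\oz)))=\oz$. From Lemma \ref{lemma:transformacion PDE}, the substitution $Y(\oz)=y(\orr(\oz))$ identifies solutions of \eqref{eq:PDE original} with solutions of \eqref{eq:PDE cambiada} in an invertible fashion (the matrix $M$ of that lemma is invertible). Therefore, starting from a solution $Y$ of the transformed PDE, setting $\ox=\orr(\oz)$ and using the inverse relation gives $y(\ox)=Y(\overline{h}(\ox,\odelta(\ox)))$, which must satisfy \eqref{eq:PDE original}.

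The main subtlety, compared to the ODE case, is ensuring the applicability of Lemma \ref{lemma:transformacion PDE}: that lemma requires the change of variables $\orr$ itself to have invertible Jacobian, a condition that is not directly part of the radical-parametrization rank condition on $\mQ$. I would dispatch this as follows: because the last $m$ coordinates of points of $\VT$ are functions of the first $n$ (via the $\odelta$), the projection $\VT\to\C^n$ onto the first $n$ coordinates is dominant, and precomposing with the parametrization $\mQ$ exhibits $\orr$ as a dominant rational self-map of $\C^n$; such a map has generically invertible Jacobian, so the hypotheses of Lemma \ref{lemma:transformacion PDE} are met. With this in hand the arguments above go through without change.
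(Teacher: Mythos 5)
Your proposal is correct and follows essentially the same route as the paper: the paper's proof is a one-line appeal to \cite[Theorem 4.11, (ii)]{SSV} and Theorem \ref{th:equivalencia PDE} for parts (1)--(2) and to the argument of Corollary \ref{cor:cambio}(3) for part (3), and your write-up unpacks exactly those ingredients (the dominant map $\mR\circ\mQ$ for unirationality, rationality of $\oa(\orr(\oz))$ plus Theorem \ref{th:equivalencia PDE} for algebraicity, and the identity $\overline{h}(\mQ(\oz))=\oz$ for the back-substitution). Your closing verification that $\orr$ has generically invertible Jacobian --- needed to invoke Lemma \ref{lemma:transformacion PDE} and left implicit in the paper --- is sound and a worthwhile addition.
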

\begin{proof} 1 and 2 are consequences of \cite[Theorem 4.11, (ii)]{SSV} and Theorem \ref{th:equivalencia PDE}. On the other hand, 3 is analogous to Corollary \ref{cor:cambio}, 3.
\end{proof}

In order to apply the reparametrization algorithm to the case of PDEs, a parametrization algorithm for $n$-dimensional algebraic varieties is required. For $n=2$, surface parametrization algorithms do exist, see for example \cite{Schicho}; no general constructive results are known to us.

% Ejemplo del Zwillinger, pero versión anterior, sin anidados
%\begin{example}
%We consider an equation based on \cite[p. 328, Example 2]{Zwillinger}: find $u(x,y,z)$ such that
%\[
%  -u^2 - \sqrt{xy+y^2}\,u_z + (2\sqrt{x+y}-2\sqrt{y})\,u_x + 2\sqrt{y}\,u_y = 0.
%\]
%Maple 2018 fails to solve it.
%
%We observe that the radical tower is generated by $\sqrt{x+y}$ and $\sqrt{y}$. Let the $\VT$ then be the closure of the image of $(x,y,z)\mapsto(x,y,z,\sqrt{y},\sqrt{x+y})$, which can be parametrized rationally as $(v,w,z)\mapsto(v^2-w^2,w^2,z,w,v)$, for example. Substituting $x=v^2-w^2$ and $y=w^2$ the equation becomes, in terms of $U(v,w,z)$,
%\[
%  -U^2 - vwU_z + U_w + U_v = 0.
%\]
%Maple finds the general solution
%\[
%  U(v,w,z) = \left( -v+F\left(w-v,\frac{v^3}{3}+\frac{v^2(w-v)}{2}+z\right) \right)^{-1}
%\]
%which becomes\mnote{poner con detalle la $h$}
%\[
%  u(x,y,z) = \left( -\sqrt{x+y}+F\left(\sqrt{y}-\sqrt{x+y},\frac{(x+y)^{3/2}}{3}+\frac{(x+y)(\sqrt{y}-\sqrt{x+y})}{2}+z\right) \right)^{-1}.
%\]
%\end{example}

\begin{ex}
	We consider an equation based on \cite[Section II.B.88, Example 2]{Zwillinger}: find $y(x_1,x_2,x_3)$ such that
	\[
		(-\sqrt{x_2}\,y_3+2\,y_1)\sqrt{x_1+\sqrt{x_2}} + 2\sqrt{x_2}\,y_2 - y^2 - y_1.
	\]
	Maple 2018 fails to solve it.
	
	In this case the radicals are nested, but it does not affect our construction. The radical tower is generated by $\delta_1$ such that $\delta_1^2=x_2$, and $\delta_2$ such that $\delta_2^2=x_1+\delta_1$. The tower variety is parametrized (nonrationally) by $(x_1,x_2,x_3,\delta_1,\delta_2)$ and it does admit a rational parametrization
	\[
	\mQ(z_1,z_2,z_3)=(z_1,(z_2^2-z_1)^2,z_3,z_2^2-z_1,z_2)
	\]
	that can be deduced from the defining equations of the $\odelta$. If we substitute the $\ox$ by the first three components of $\mQ$ we obtain the new equation in $Y(z_1,z_2,z_3)$
	\[
	(2z_2-1)\,Y_1 + z_2(z_1-z_2^2)\,Y_3 - Y^2 + Y_2.
	\]
	With Maple we obtain the following general solution:
	\[
	Y = \frac2{1-2z_2+F(w_1,w_2)},\quad\mbox{where}\quad w_1 = z_2^2-z_1-z_2,
	\]
	\[
	w_2 = \frac56 z_2^3-\frac54z_2^2+\frac34z_2-\frac16+\frac12z_1z_2^2-\frac12z_1^2-z_1z_2+\frac34z_1+z_3.
	\]
	The inverse of $\mQ$ is $h(A,B,C,D,E)=(A,E,C)$. Therefore the inverse of the change of variables is $(x_1,x_2,x_3)\mapsto(x_1,\delta_2,x_3)$ and the general solution of the original equation is, after simplification,
	\[
	y = \frac2{1-2\sqrt{x_1+\sqrt{x_2}}+F(q_1,q_2)},\quad\mbox{where}\quad q_1 = \sqrt{x_2}-\sqrt{x_1+\sqrt{x_2}},
	\]
	\[
	q_2 = \frac{(10\sqrt{x_2}-2x_1+9)\sqrt{x_1+\sqrt{x_2}}+(6x_1-15)\sqrt{x_2}}{12} - \frac{x_1}{2} + x_3 - \frac16.
	\]
\end{ex}

\begin{ex}
	We take the equation \cite[Part II, Section E, 6.62]{Kamke}: find the $u(x_1,x_2)$ such that
	\[
	u_1^2+u_2^2=\frac{1}{\sqrt{x_1^2+x_2^2}}.
	\]
	Maple 2018 does not give any solution.
	
	The surface $\VT$ is the image of $(x_1,x_2,\sqrt{x_1^2+x_2^2})$ which is a cone, and can be parametrized rationally as
	\[
	\mQ(z_1,z_2) = \left( \frac{2z_1z_2}{z_1^2+1},\frac{z_2(z_1^2-1)}{z_1^2+1},z_2 \right).
	\]
	Substituting the first two components of this into the $\ox$ we have the new equation
	\[
	Y_1^2 (z_1^2+1)^2 + 4 z_2^2 Y_2^2 - 4z_2 = 0.
	\]
	It is clearly an equation with separated variables, and so Maple provides the general solution
	\[
	Y(z_1,z_2) = F_1(z_1)+F_2(z_2) \quad\mbox{with}\quad F_1'(z_1)^2=\frac{K}{(z_1^2+1)^2}, \ F_2'(z_2)^2=\frac{4z_2-K}{4z_2^2}
	\]
	which is, explicitly,
	\[
	Y(z_1,z_2) = \sqrt{K}\arctan(z_1) - \sqrt{K}\arctan\left(\frac{\sqrt{4z_2-K}}{\sqrt{K}}\right) + \sqrt{4z_2-K}.
	\]
	In order to recover solutions of the original equation we consider the inverse of $\mQ$, which is $h(A,B,C)=(A/(C-B),C)$. Then the change that recovers the solutions of the original equation is
	\[
	(z_1,z_2)=\left(\frac{x_1}{\sqrt{x_1^2+x_2^2}-x_2},\sqrt{x_1^2+x_2^2}\right).
	\]
\end{ex}

% ------------------------------------------------------------------------

%\subsection*{Acknowledgment}

%\begin{thebibliography}{1}
%\bibitem{test} A. B. C. Test, \textit{On a Test.} J. of Testing
%\textbf{88} (2000), 100--120.
%\bibitem{latex} G. Gr\"atzer, \textit{Math into \LaTeX.} 3rd Edition,
%Birkh\"auser, 2000.
%\end{thebibliography}

% ------------------------------------------------------------------------
\end{document}